\newtheorem{Theorem}{Theorem}[section]
\newtheorem{Proposition}{Proposition}[section]
\newtheorem{Lemma}{Lemma}[section]
\newtheorem{Corollary}{Corollary}[section]
\newtheorem{Remark}{Remark}[section]
\newcommand{\bTheorem}[1]{
\begin{Theorem} \label{T#1} }
\newcommand{\eT}{\end{Theorem}}
\newcommand{\bProposition}[1]{
\begin{Proposition} \label{P#1}}
\newcommand{\eP}{\end{Proposition}}
\newcommand{\bRemark}[1]{
\begin{Remark} \label{R#1}}
\newcommand{\eR}{\end{Remark}}
\newcommand{\bLemma}[1]{
\begin{Lemma} \label{L#1} }
\newcommand{\eL}{\end{Lemma}}
\newcommand{\bCorollary}[1]{
\begin{Corollary} \label{C#1} }
\newcommand{\eC}{\end{Corollary}}
\newcommand{\bFormula}[1]{
\begin{equation} \label{#1}}
\newcommand{\eF}{\end{equation}}
\newcommand{\DC}{C^\infty_c}
\newcommand{\vu}{\vc{u}}
\newcommand{\vc}[1]{{\bf #1}}
\newcommand{\Div}{{\rm div}_x}
\newcommand{\Grad}{\nabla_x}
\newcommand{\tn}[1]{\mbox {\F #1}}
\newcommand{\dx}{{\rm d} {x}}
\newcommand{\ds}{{\rm d} {s}}
\newcommand{\bg}{\begin{equation}}
\newcommand{\ed}{\end{equation}}
\newcommand{\bga}{\begin{eqnarray}}
\newcommand{\eda}{\end{eqnarray}}
\newcommand{\intR}[1]{\int_{\mathbb R^3} #1 \ \dx}
\newcommand{\E}{\mathcal{E}}
\newcommand{\Aa}{\mathcal{A}}
\newcommand{\ep}{\varepsilon}
\font\F=msbm10 scaled 1200
\newcommand{\Del}{\Delta_x}
\newcommand{\RR}{{\mathbb R}}
\DeclareMathOperator{\trace}{trace}
\definecolor{Cgrey}{rgb}{0.85,0.85,0.85}
\definecolor{Cblue}{rgb}{0.50,0.85,0.85}
\definecolor{Cred}{rgb}{1,0,0}
\definecolor{fancy}{rgb}{0.10,0.85,0.10}
\newcommand\Cbox[2]{%
    \newbox\contentbox%
    \newbox\bkgdbox%
    \setbox\contentbox\hbox to \hsize{%
        \vtop{
            \kern\columnsep
            \hbox to \hsize{%
                \kern\columnsep%
                \advance\hsize by -2\columnsep%
                \setlength{\textwidth}{\hsize}%
                \vbox{
                    \parskip=\baselineskip
                    \parindent=0bp
                    #2
                }%
                \kern\columnsep%
            }%
            \kern\columnsep%
        }%
    }%
    \setbox\bkgdbox\vbox{
        \color{#1}
        \hrule width  \wd\contentbox %
               height \ht\contentbox %
               depth  \dp\contentbox
        \color{black}
    }%
    \wd\bkgdbox=0bp%
    \vbox{\hbox to \hsize{\box\bkgdbox\box\contentbox}}%
    \vskip\baselineskip%
}
\begin{document}


\title{On asymptotic isotropy for a hydrodynamic model \\ of liquid crystals}

\author{Mimi Dai \and Eduard Feireisl \thanks{The research of E.F. leading to these results has received funding from the European Research Council under the European Union's 
Programme (FP7/2007-2013)/ ERC Grant Agreement 320078. The Institute of Mathematics of the Academy of Sciences of the Czech
Republic is supported by RVO:67985840.}
\and Elisabetta Rocca
\thanks{The work of E.R. was supported by the
 FP7-IDEAS-ERC-StG \#256872 (EntroPhase) and by GNAMPA (Gruppo Nazionale per l'Analisi Matematica, la Probabilit\`a e le loro Applicazioni) of INdAM (Istituto Nazionale di Alta Matematica).}\and Giulio Schimperna \and Maria E. Schonbek \thanks{The
 research of M.S. was partially supported by NSF Grant DMS-0900909.}}




\maketitle

\bigskip
\centerline{Department of Mathematics, University of Illinois at Chicago}
\centerline{851 S. Morgan Street
Chicago, IL 60607-7045, USA}

\bigskip
\centerline{Institute of Mathematics of the Academy of Sciences of the Czech Republic}
\centerline{\v Zitn\' a 25, 115 67 Praha 1, Czech Republic}

\bigskip
\centerline{Weierstrass Institute for Applied
Analysis and Stochastics}
\centerline{Mohrenstr.~39, D-10117 Berlin,
Germany}
\centerline{and}
\centerline{Dipartimento di Matematica, Universit\`a di Milano}
\centerline{Via Saldini 50, 20133 Milano, Italy}

\bigskip
\centerline{Dipartimento di Matematica, Universit\`a di Pavia}
\centerline{Via Ferrata 1, I-27100 Pavia, Italy}

\bigskip
\centerline{Department of Mathematics, University of California}
\centerline{Santa Cruz, CA 95064, USA}



\bigskip

\bigskip

\bigskip

\begin{abstract}
We study a PDE system describing the motion of liquid crystals by means of the $Q-$tensor description
for the crystals coupled with the incompressible Navier-Stokes system. Using the method of Fourier
splitting, we show that solutions of the system tend to the isotropic state at the rate $(1 + t)^{-3/2}$
as $t \to \infty$.
\end{abstract}

\medskip

{\bf Key words:} Liquid crystal, $Q-$tensor description, long-time behavior, Fourier splitting

\medskip

\tableofcontents

\section{Introduction}
\label{i}

We consider a frequently used hydrodynamic model of
nematic liquid crystals, where the local configuration of the crystal is represented
by the $Q-$tensor $\tn{Q} = \tn{Q}(t,x)$, while its motion is described
through the Eulerian velocity field $\vu = \vu(t,x)$, both quantities being functions of the
time $t > 0$ and the spatial position $x \in \RR^3$.
The tensor $\tn{Q} \in \RR^{3 \times 3}_{{\rm sym},0}$
is a symmetric traceless matrix, whose time evolution is described by the equation

\Cbox{Cgrey}{

\bFormula{i1}
\partial_t \tn{Q} + \Div (\tn{Q} \vu ) - \tn{S}(\Grad \vu, \tn{Q} ) = \Del \tn{Q} - \mathcal{L} [ \partial F (\tn{Q}) ],
\eF

}

\noindent
with
\[
\mathcal{L}[\tn{A}] \equiv \tn{A} - \frac{1}{3} {\rm tr}[\tn{A}] \tn{I}
\]
denoting the projection onto the space of traceless matrices, and $F$ denoting a potential function which will be described later.
The velocity field obeys the Navier-Stokes system

\Cbox{Cgrey}{

\bFormula{i2}
\partial_t \vu + \Div (\vu \otimes \vu ) + \Grad p = \Del \vu + \Div \Sigma (\tn{Q})
\eF
supplemented with the incompressibility constraint
\bFormula{i3}
\Div \vu = 0.
\eF

}

The tensors $\tn{S}$ and $\Sigma$ are taken the form

\Cbox{Cgrey}{

\bFormula{i4}
\tn{S}(\Grad \vu, \tn{Q}) = (\xi \ep(\vu) + \omega(\vu)) \left( \tn{Q} + \frac{1}{3} \tn{I} \right) + \left( \tn{Q} + \frac{1}{3} \tn{I} \right)
(\xi \ep(\vu) - \omega(\vu)) - 2 \xi \left( \tn{Q} + \frac{1}{3} \tn{I} \right) \tn{Q} : \Grad \vu,
\eF

\bFormula{i5}
\Sigma (\tn{Q} ) = 2 \xi \tn{H} : \tn{Q} \left( \tn{Q} + \frac{1}{3} \tn{I} \right) - \xi \left[
\tn{H} \left( \tn{Q} + \frac{1}{3} \tn{I} \right) - \left( \tn{Q} + \frac{1}{3} \tn{I} \right) \tn{H} \right] - (\tn{Q} \tn{H} - \tn{H} \tn{Q} ) -
\Grad \tn{Q} \odot \Grad \tn{Q},
\eF

}

\noindent where we have denoted
\[
 \epsilon (\vu) = \frac{1}{2} ( \Grad \vu + \Grad^t \vu ), \
  \omega (\vu) =  \frac{1}{2} ( \Grad \vu - \Grad^t \vu ),
\]
\[
 \tn{H} = \Del \tn{Q} - \mathcal{L} [ \partial F (\tn{Q}) ], \ \mbox{and}\
 (\Grad \tn{Q} \odot \Grad \tn{Q})_{ij} = \partial_i \tn{Q}_{\alpha\beta} \partial_j \tn{Q}_{\alpha\beta}.
\]
Here and hereafter, we use the summation convention for repeated indices. The number $\xi \in \RR$ is a
scalar parameter measuring the ratio between the rotation and the aligning effect that a shear
flow exerts over the directors.

We refer to Beris and Edwards \cite{BerEdw} for the physical background, and to Zarnescu et al. \cite{KirWiZa},
\cite{PaiZar2}, \cite{PaiZar1} for mathematical aspects of the problem.


\subsection{Energy balance}

The problem (\ref{i1} - \ref{i5}) admits a natural \emph{energy functional}, namely
\[
E = \frac{1}{2} |\vu|^2 + \frac{1}{2} |\Grad \tn{Q} |^2 + F(\tn{Q}),
\]
where
\[
F: \tn{R}^{3 \times 3}_{\rm sym} \to (- \infty, \infty]
\]
is a given (generalized) function.

We assume that $F \in C^2(\mathcal{O})$, where
$\mathcal{O} \subset \RR^{3 \times 3}_{\rm sym}$ is an open set containing the isotropic
state $\tn{Q} \equiv 0$, and there are two balls $B_{r_1}$, $B_{r_2}$
with $r_1 < r_2$ such that
\[
\tn{Q} = 0 \in B_{r_1} \equiv \{ |\tn{Q}| < r_1 \} \subset B_{r_2} \equiv \{ |\tn{Q}| \le r_2 \} \subset \mathcal{O}.
\]
In addition, we suppose that $\tn{Q} = 0$ is the (unique) global minimum of $F$ in $\mathcal{O}$, specifically,
\bFormula{i6}
  F(0) = 0, \ F(\tn{Q}) > 0 \ \mbox{for any} \ \tn{Q} \in \mathcal{O} \setminus \{0\}
\eF
and
\bFormula{i7}
\partial F(\tn{Q}) : \tn{Q} \geq 0 \ \mbox{whenever} \ \tn{Q} \in B_{r_1} \ \mbox{or} \ \tn{Q} \in \mathcal{O} \setminus B_{r_2}.
\eF

Let us note here that the polynomial potentials considered by Paicu and Zarnescu \cite{PaiZar2}:
\bFormula{D1intro}
F(\tn{Q}) = \frac{a}{2} |\tn{Q}|^2 + \frac{b}{3} \trace[\tn{Q}^3 ] + \frac{c}{4} |\tn{Q}|^4,
\eF
at least in case $a>0$ in a neighborhood of 0, fit this conditions (cf. Section~\ref{D} for further comments on this point).

Taking the scalar product of equation (\ref{i1}) with $\tn{H}$, the scalar product of
equation (\ref{i2}) with $\vu$, adding the
resulting expressions and integrating over the physical space $\RR^3$
(cf.~\cite[Proof of Prop.~1]{PaiZar1} for details),
we obtain the total energy balance
\bFormula{i8}
 \frac{{\rm d}}{{\rm d}t} \intR{ E } + \intR{ |\Grad \vu |^2 + \Big| \Del \tn{Q}
  - \mathcal{L}[\partial F(\tn{Q})] \Big|^2 } = 0
\eF
provided that
\bFormula{i9}
\vu \to 0, \ \tn{Q} \to 0 \ \mbox{as}\ |x| \to \infty
\eF
sufficiently fast.

The presence of the dissipative term
\[
\intR{ |\Grad \vu |^2 + \Big| \Del \tn{Q} - \mathcal{L}[\partial F(\tn{Q})] \Big|^2 }
\]
suggests that
\bFormula{i10-}
\Grad \vu(t, \cdot) \to 0,\ \tn{Q}(t, \cdot) \to \tilde {\tn{Q} } \ \mbox{as}\ t \to \infty \ \mbox{in a certain sense,}
\eF
where $\tilde {\tn Q}$ is a static distribution of the $Q-$tensor density, namely it
satisfies
\bFormula{i11}
- \Del \tilde{ \tn{Q} } + \mathcal{L} \left[ \partial F (\tilde{\tn{Q} }) \right] = 0.
\eF

As we shall see below (Lemma \ref{Lp1}), the hypothesis (\ref{i6}) implies that $\tilde {\tn{Q}} \equiv 0$;
more specifically, any solution $\tilde {\tn{Q}}$ of (\ref{i11}) belonging to the class
\[
  \Grad \tilde{\tn{Q}} \in L^2(\RR^3; \RR^3), \
   F(\tilde{\tn{Q}}) \in L^1(\RR^3), \ \tilde{\tn{Q}}(x) \in B_{r_2} \ \mbox{for all}\ x \in \RR^3
\]
necessarily vanishes identically in $\RR^3$, in particular (\ref{i10-}) reduces to
\bFormula{i10}
\Grad \vu(t, \cdot) \to 0,\ \tn{Q}(t, \cdot) \to  0 \ \mbox{as}\ t \to \infty.
\eF


\subsection{Asymptotic isotropy}

Our goal is to justify (\ref{i10}) in the class of \emph{weak} solutions
to the system (\ref{i1} - \ref{i3}).  To this end, we need
a simplifying assumption setting the parameter $\xi = 0$.
Hence, (\ref{i4}), (\ref{i5}) reduce to
\bFormula{i13}
\tn{S}(\Grad \vu, \tn{Q}) = \omega(\vu) \tn{Q} - \tn{Q} \omega (\vu),
\
\Sigma (\tn{Q} ) =  - \tn{Q} \Del \tn{Q}  + \Del \tn{Q} \tn{Q}
- \Grad \tn{Q} \odot \Grad \tn{Q},
\eF
where we have used
\[
 \tn{Q} \mathcal{L} [ \partial F (\tn{Q}) ] - \mathcal{L} [ \partial F (\tn{Q}) ] \tn{Q}
 = \tn{Q} \partial F (\tn{Q}) - \partial F (\tn{Q}) \tn{Q}
 = 0.
\]
Such an assumption simplifies considerably the analysis of the $Q-$tensor equation (\ref{i1}), in particular we may use its renormalized version
in order to deduce stability of the isotropic state in the space $L^\infty$.

\medskip

\noindent Our aim is to show that
\bFormula{i14}
  \| \vu(t, \cdot) \|_{L^2(\RR^3; \RR^3)}
    + \| \tn{Q}(t, \cdot) \|_{H^1(\RR^3; \RR^{3 \times 3})}
   \leq c (1 + t)^{-3/4}, \ t > 0
\eF
for \emph{any} weak solution of the problem (\ref{i1} - \ref{i3}), (\ref{i13}),
where the constant $c$ depends only on the initial data. Such a result seems
optimal,  as the decay coincides with that for the linear heat equation.
We would like to point out that our hypotheses (cf.~(\ref{i6}), in particular) are also optimal
for \emph{unconditional} convergence to an equilibrium. Indeed
one may conjecture, by analogy with the nowadays standard existence theory for
semilinear elliptic problems developed by Berestycki and Lions \cite{BerLio1}, \cite{BerLio2},
that the stationary problem (\ref{i11}) may admit a non-zero solution if $F < 0$ at some point.
Under these circumstances, convergence to a single stationary state is in general not
expected, cf.~\cite[Theorem 4.1]{EF21}.

In order to show (\ref{i14}) we make use of the method of \emph{Fourier splitting}\/
developed in \cite{Sch3}, \cite{Sch2},
\cite{Sch1} and later used in \cite{DaiQSch} to study the
long-time behavior of a liquid crystal model based on the
description via the director field. Besides the higher complexity of the $Q-$tensor model
reflected through the constitutive relations (\ref{i13}),
the main difference between \cite{DaiQSch} and this paper is that the present result is
\emph{unconditional} and applies to all weak solutions of the problem  satisfying
an energy inequality, while \cite{DaiQSch}
requires the initial data to be small and regular.
As is well known, the ultimate regularity of the Navier-Stokes and related problems is
based on the so-called Ladyzhenskaya estimates (cf.~\cite{DaiQSch}) available for the
present problem only in the $2D-$geometry, see Paicu and Zarnescu \cite{PaiZar2}.

The paper is organized as follows. In Section \ref{p}, we introduce the concept
of \emph{finite energy weak solution} to the problem (\ref{i1} - \ref{i3}),
(\ref{i13}) and collect some preliminary material, including the energy
inequality and its immediate implications. Section \ref{m} states rigorously
our main result. Section \ref{d} deals with the $Q-$tensor
equation, in particular, we deduce decay estimates for $\tn{Q}$
assuming higher integrability of the initial data. The proof of
the decay of the velocity field is completed in Section~\ref{v}
by means of the Fourier splitting method. Finally, we discuss the implications of our results for a special
class of polynomial potentials in Section \ref{D}.


\section{Preliminaries, weak solutions, energy inequality}
\label{p}

The expected regularity of the weak solutions is basically determined by
the energy balance (\ref{i8}).  More specifically,
we consider the weak solutions
$(\tn{Q}$, $\vu)$ belonging to the following class:

\begin{itemize}

\item[(a)]
\[
  \tn{Q} \in C_{\rm weak} ([0, T] ; L^2(\RR^3; \RR^{3\times 3})),
   \ \sup_{t \in [0,T]} \left( \| \tn{Q}(t, \cdot)  \|_{L^1 \cap L^\infty (\RR^3;\RR^{3\times 3})}
   + \| \tn{Q} \|_{W^{1,2}(\RR^3; \RR^{3\times 3})} \right) < \infty,
\]
\[
  \tn{Q} \in L^2(0,T; W^{2,2}(\RR^3;\RR^{3\times 3})),
   \ \tn{Q}(t, x) \in  B_{r_2} \
    \mbox{for all}\ t \in [0,T],\ \mbox{a.a.}\ x \in \RR^3,
\]
for any $T > 0$ ;
\item[(b)]
\[
   \vu \in C_{\rm weak} (0,T; L^2(\RR^3; \RR^3)), \ \Grad \vu \in L^2(0,T; L^2(\RR^3; \RR^{3\times 3}))
\]
for any $T > 0$.
\end{itemize}

The last condition in~(a) states that $\tn{Q}$ remains
separated from the
boundary of the domain $\mathcal{O}$ of $F$ (if $F$ is allowed to explode
near $\partial\mathcal{O}$).
This property is often referred to as
\emph{strict physicality}\/ of the $Q-$tensor configuration.
Such a property has been recently proved by Wilkinson
\cite{Wil} in the case where system~(\ref{i1} - \ref{i5}) is settled
in the unit torus and complemented with periodic boundary conditions.
The estimates performed below (cf.~in particular Subsec.~\ref{MP})
will imply, as a byproduct, that the same property holds also in the present
case. A rigorous proof of \emph{existence}\/ for weak solutions
to~(\ref{i1} - \ref{i5}) in the whole euclidean space $\RR^3$
was established by Paicu and Zarnescu \cite{PaiZar2},
\cite{PaiZar1} for a certain class of smooth potentials $F$.
Actually, the uniform estimates we are going to detail below
can give some idea on the highlights of their argument;
moreover, they will show that \emph{singular potentials}\/
satisfying (\ref{i6}), (\ref{i7}) can be dealt with by the same method.
We finally note that the regularity conditions stated in~(a)
are fully consistent with the a-priori estimates.


\subsection{Weak solutions for the $Q-$tensor equation}

If $\tn{Q}$, $\vu$ belong to the regularity class specified  above, it is easy to check that
\bFormula{p1}
  \| \tn{S}(\Grad \vu,\tn Q) \|_{L^2(0,T; L^1 \cap L^2 (\RR^3; \RR^{3 \times 3}))} \leq c,
\eF
\bFormula{p2}
  {\rm ess}\sup_{t \in (0,T)}
     \left\| \mathcal{L} [\partial F(\tn{Q}) ](t, \cdot) \right\|_{L^1 \cap L^\infty (\RR^3; \RR^{3 \times 3})} \leq c
\eF
and
\bFormula{p3}
  \Div (\tn{Q} \vu) = \vu \cdot \Grad \tn{Q} \in L^\infty (0,T; L^1 (\RR^3;\RR^{3\times 3}))
        \cap L^2 (0,T; L^{3/2}(\RR^3 ; \RR^{3\times 3}))
	\cap L^1(0,T; L^{3}(\RR^3 ; \RR^{3\times 3}))
\eF
for any $T > 0$, where we have used the embedding relation
$W^{1,2} \hookrightarrow L^6$ in three dimension.

Consequently, in view of the standard parabolic $L^p-L^q$ estimates, all
partial derivatives appearing in (\ref{i1}) exist in the strong sense and the
equation is satisfied a.e.~in the space time cylinder $[0, \infty) \times \RR^3$.


\subsection{The Navier-Stokes system}

As for the Navier-Stokes system (\ref{i2}), we have
\bFormula{p4}
   \vu \otimes \vu \in L^\infty (0,T; L^{1}(\RR^3; \RR^{3\times 3}))
      \cap L^2 (0,T; L^{3/2}(\RR^3; \RR^{3\times 3}))
      \cap L^1 (0,T; L^{3} (\RR^3;\RR^{3\times 3})),
\eF
\bFormula{p5}
\tn{Q} \Del \tn{Q}, \ \Del \tn{Q} \tn{Q} \in L^2(0,T; L^{1} \cap L^2 (\RR^3; \RR^{3\times 3})) ,
\eF
\bFormula{p51}
\Grad \tn{Q} \odot \Grad \tn{Q} \in
L^\infty (0,T; L^1 (\RR^3; \RR^{3\times 3})) \cap L^2 (0,T; L^2 (\RR^3; \RR^{3\times 3})),
\eF
where we have used the Gagliardo-Nirenberg interpolation inequality
\bFormula{p6}
\| \Grad v \|_{L^4}^2 \leq c \| \Del v \|_{L^2} \| v \|_{L^\infty}.
\eF

Thus, applying the standard \emph{Helmholtz projection} $\vc{P}$ onto the space of
solenoidal functions, the system (\ref{i2}) may be interpreted as a linear parabolic equation
\bFormula{p7}
\partial_t \vu - \Del \vu = \vc{P} \Div \Big[ - \tn{Q} \Del \tn{Q} + \Del \tn{Q} \tn{Q}  -
\Grad \tn{Q} \odot \Grad \tn{Q} - \vu \otimes \vu \Big],
\eF
with the right-hand side ranging in a Sobolev space $L^p(0,T;W^{-1,r}(\RR^3;\RR^3))$
for certain $p,r$.


\section{Main result}
\label{m}

We are ready to state the main result of the present paper.
\nopagebreak


\bTheorem{m1}
Let the potential $F$ satisfy the hypotheses (\ref{i6}), (\ref{i7}). Let $(\tn{Q}, \vu)$ be a
global-in-time weak solution of the system (\ref{i1} - \ref{i3}), (\ref{i13}) satisfying the energy inequality
\bFormula{m1}
\intR{ \left[ \frac{1}{2} |\vu|^2 + \frac{1}{2} |\Grad \tn{Q} |^2 + F(\tn{Q}) \right] (t, \cdot) }
+ \int_{s}^t \intR{ \left[ |\Grad \vu |^2 + \Big| \Del \tn{Q} - \mathcal{L}[\partial F(\tn{Q})] \Big|^2 \right] }
\eF
\[
\leq
\intR{ \left[ \frac{1}{2} |\vu|^2 + \frac{1}{2} |\Grad \tn{Q} |^2 + F(\tn{Q}) \right] (s, \cdot) }
\]
for all $t > s$ and a.a.~$s \in [0, \infty)$ including $s=0$, emanating from the initial data
\[
\vu(0, \cdot) = \vu_0, \ \tn{Q}(0, \cdot) = \tn{Q}_0,
\]
\bFormula{m2}
 \vu_0 \in L^1 \cap L^2 (\RR^3; \RR^3), \ \Div \vu_0  = 0,
 \ \tn{Q}_0 \in L^1\cap W^{1,2} (\RR^3; \RR^{3\times 3}_{{\rm sym},0} ),
  \ |\tn{Q}_0(x)| \le r_2 \ \mbox{for a.a.}\ x \in \RR^3,
\eF
where $r_2$ has been introduced in (\ref{i7}).

Then there exist a constant $c>0$ depending solely on the initial data
$[\vu_0, \tn{Q}_0]$ such that
\bFormula{m3}
   \| \vu(t, \cdot) \|_{L^2(\RR^3;\RR^3)}
     + \| \tn{Q}(t, \cdot) \|_{W^{1,2}(\RR^3; \RR^{3 \times 3})}  \leq c (1 + t)^{-\frac{3}{4}}
\eF
for all $t > 0$. If, in addition,
\bFormula{m3bis}
F(\tn{Q}) \geq \lambda |\tn{Q}|^2 \ \mbox{in}\ B_{r_1}, \ \lambda > 0,
\eF
then the decay  of the $L^2$ norm of $\tn{Q}$ is
\bFormula{d12bis-0}
  \| \tn{Q} (t, \cdot) \|_{L^2(\RR^3; \RR^{3\times 3})}^2
    \leq c \exp(-d t) \ \mbox{for all} \ t \geq 0 \ \mbox{and some}\ d > 0.
\eF
\eT


\bRemark{M1}
As already pointed out above, the \emph{existence} of the finite energy
weak solutions satisfying the energy inequality (\ref{m1}) was proved by
Paicu and Zarnescu \cite[Prop.~2]{PaiZar1} for certain potentials $F$.
\eR
\noindent%
\noindent%
The rest of the present paper is devoted to the proof of Theorem \ref{Tm1}.


\section{Decay for the $Q-$tensor}
\label{d}

We start by deriving decay estimates for solutions to the $Q-$tensor equation,
which we rewrite as
\bFormula{d1}
\partial_t \tn{Q} + \vu \cdot \Grad \tn{Q}  - \Del \tn{Q} = - \mathcal{L} [ \partial F (\tn{Q}) ] + \omega(\vu) \tn{Q} - \tn{Q} \omega (\vu),
\quad \tn{Q}(0, \cdot) = \tn{Q}_0.
\eF
The class of weak solutions considered in Theorem \ref{Tm1} has the $\tn{Q}-$component
in $L^1 \cap L^\infty(\RR^3)$ at least on compact time intervals,
therefore we may take the scalar product of (\ref{d1}) with
$2 G'(|\tn{Q} |^2) \tn{Q}$, where $G' \in C[0, \infty)$, and integrate over the physical space to obtain
\bFormula{d2}
 \frac{{\rm d}}{{\rm d}t} \intR{ G( |\tn{Q} |^2 ) }
  + \intR{ \left[ 2 G'(|\tn{Q}|^2) |\Grad \tn{Q} |^2
  + G''(|\tn{Q} |^2) \left| \Grad |\tn{Q}|^2 \right|^2 \right] }
 = - 2 \intR{  G' (|\tn{Q}|^2 ) \partial F (\tn{Q}) : \tn{Q} },
\eF
where we have used that
\bFormula{d3}
  [ \omega(\vu) \tn{Q} - \tn{Q} \omega (\vu) ]: \tn{Q} = 2 (\tn{Q} \tn{Q}) : \omega (\vu) = 0.
\eF
The relation (\ref{d2}) may be seen as a kind of \emph{renormalized}
energy balance for $\tn{Q}$. It is worth noting that, thanks to our
hypothesis $\xi = 0$, this relation is independent of the velocity $\vc{u}$.


\subsection{A maximum principle}
\label{MP}

Our first goal is to show that
\bFormula{d2a}
\| \tn{Q}(t,\cdot) \|_{L^\infty(\RR^3; \RR^{3 \times 3})} \leq r_2 \ \mbox{for all}\ t \geq 0
\eF
provided that the initial datum $\tn{Q}_0$ satisfies (\ref{m2}).
To this end, it is enough to take $G$ in (\ref{d2}) such that
\[
G(z) =0 \ \mbox{for}\ z \in [0,r_2^2], \ G' \geq 0, G'' \geq 0, \ G(z) > 0 \ \mbox{for}\ z > r_2^2.
\]
In view of the hypotheses (\ref{i7}), (\ref{m2}) we have
\[
\intR{ G(|\tn{Q}|^2)(t, \cdot) } \leq \intR{ G(|\tn{Q}_0 |^2) } = 0 \ \mbox{for all}\ t \geq 0
\]
yielding the desired conclusion (\ref{d2a}).

In what follows, in view of \eqref{d2a},  we may assume, by virtue of (\ref{i6}) and  (\ref{i7}), that
\bFormula{d3a}
0 \leq F(\tn{Q} ) \leq \alpha |\tn{Q}|^2, \ \alpha > 0.
\eF


\subsection{Asymptotic smallness of $\tn{Q}$}

As a consequence of (\ref{d2a}) and the energy inequality (\ref{m1}), we deduce that
\bFormula{d4}
  \sup_{t \geq 0} \left[ \| \tn{Q}(t, \cdot) \|_{L^\infty(\RR^3, \RR^{3 \times 3})}
   + \| F(\tn{Q}(t, \cdot) ) \|_{L^1(\RR^3)} +  \| \Grad \tn{Q} (t, \cdot) \|_{L^2(\RR^3;\RR^{3 \times 3 \times 3})} \right]
  \leq c.
\eF
In addition, there exists a sequence $t_n \to \infty$ such that
\bFormula{d5}
  \Del \tn{Q}(t_n, \cdot) - \mathcal{L} [ \partial F (\tn{Q} )](t_n, \cdot)
   = {g}_n \to 0 \ \mbox{in}\ L^2(\RR^3; \RR^{3 \times 3}) \ \mbox{as}\ n \to \infty.
\eF

Our goal is to show that (\ref{d4}), (\ref{d5}) imply that $\tn{Q}(t_n, \cdot)$ tends uniformly to zero, at least for a suitable subsequence of times.
 To this end, we need the following result that may be of independent interest.

\bLemma{p1}

Let $F \in C^2 ( B_{r_2} )$, $F(0) = 0$.
Suppose that $\tn{Q}$ is a solution of the stationary problem
\bFormula{pp1}
 - \Delta \tn{Q} + \mathcal{L} [ \partial F (\tn{Q})] = 0 \ \mbox{in}\ \RR^3
\eF
satisfying
\bFormula{pp2}
|\tn{Q}| \leq r_2 , \ |\Grad \tn{Q}|^2, \ F(\tn{Q}) \in L^1(\RR^3).
\eF
Then $\tn{Q}$ satisfies Pocho\v zaev's identity
\bFormula{pocho}
\int_{\RR^3} \left( \frac{1}{2} |\Grad \tn{Q}|^2 + 3 F(\tn{Q}) \right) \dx = 0.
\eF
In particular, $\tn{Q} \equiv 0$ provided that $F \geq 0$ in $B_{r_2}$.
\eL

\begin{proof}

We use the standard Pocho\v zaev type argument. To begin we claim that any solution of
(\ref{pp1}) is smooth (at least $C^2$) because of the standard
elliptic theory.

We multiply the equation on $\vc{x} \cdot \Grad \tn{Q}$ which is a symmetric traceless tensor. Accordingly
\[
0 =
-\Delta \tn{Q} : [\vc{x} \cdot \Grad \tn{Q}] + \partial F(\tn{Q}) : [\vc{x} \cdot \Grad \tn{Q}]
\]
\[
=
-\Div \left( ( \vc{x} \cdot \Grad \tn{Q}) : \Grad \tn{Q} \right) + |\Grad \tn{Q}|^2 + \frac{1}{2} \vc{x} \cdot \Grad |\Grad \tn{Q}|^2 + \Grad F(\tn{Q}) \cdot \vc{x}.
\]
Integrating the expression on the right-hand side over a ball $B_R \subset \RR^3$ of the radius $R$, we obtain
\bFormula{pp4}
 \int_{\partial B_R}  ( \vc{x} \cdot \Grad \tn{Q}) : ( \Grad \tn{Q} \cdot \vc{n} ) \ {\rm dS}_x
   - \int_{\partial B_R} \frac{1}{2}  |\Grad \tn{Q}|^2 \ \vc{x} \cdot \vc{n} \ {\rm dS}_x
   - \int_{\partial B_R} F(\tn{Q}) \vc{x} \cdot \vc{n}\ {\rm dS}_x
\eF
\[
 \mbox{} + \frac{1}{2} \int_{B_R} |\Grad \tn{Q}|^2 \ \dx
 + 3 \int_{B_R}  F(\tn{Q}) \ \dx  = 0.
\]
Since $\tn{Q}$ satisfies (\ref{pp2}) there exists a sequence $R_n \to \infty$ such that
\[
R_n \int_{\partial B_{R_n} } \left( |\Grad \tn{Q} |^2 + F(\tn{Q}) \right) \dx \to 0 \ \mbox{as}\ n \to \infty.
\]
Thus we may take $R = R_n$ in (\ref{pp4}) and let $n \to \infty$ to conclude that
\[
  \int_{\RR^3} \left( \frac{1}{2} |\Grad \tn{Q}|^2 + 3 F(\tn{Q}) \right) \dx = 0.
\]
\end{proof}
\noindent%
Going back to (\ref{d5}) we may assume, shifting $\tn{Q}(t_n, \cdot)$ in
$x$ as the case may be, that
\bFormula{d6}
  |\tn{Q}(t_n, 0)| \geq \frac{1}{2} \sup_{x \in \RR^3} | \tn{Q}(t_n, x) |.
\eF
Now, the relations (\ref{d4}), (\ref{d5}) imply that, at least for a suitable subsequence,
\[
  \tn{Q}(t_n, \cdot) \to \tilde { \tn{Q} } \ \mbox{in} \ C_{\rm loc}(\RR^3; \RR^{3 \times 3}),
\]
where $\tilde { \tn{Q} }$ is a solution of the stationary equation (\ref{pp1})
belonging to the class (\ref{pp2}), whence, by Lemma \ref{Lp1}, $\tilde {\tn{Q}} = 0$.

Thus, making use of (\ref{d6}), we obtain that
\bFormula{d7}
  \| \tn{Q}(t_n, \cdot) \|_{L^\infty(\RR^3; \RR^{3 \times 3})} \to 0 \ \mbox{as}\ t_n \to \infty,
\eF
at least for a suitable subsequence.

Finally, we may use the same arguments as in Section \ref{MP} to deduce from (\ref{d7})
and the hypothesis (\ref{i6}) the property
\bFormula{d8}
  |\tn{Q}(t, \cdot)| < r_1 \ \mbox{for all}\ t \ \mbox{large enough.}
\eF
More specifically, one could take in \eqref{d2} the function
\[
 G(z) = \left[ \left( z - \frac{r_1^2}4 \right)_+ \right]^2.
\]
Then, noting that
\[
 G'(|\tn{Q}|^2) \partial F(\tn{Q}) : \tn{Q} \ge - c G(|\tn{Q}|^2)
  \ \text{for}\ |\tn{Q}| \in [r_1/2,r_2],
\]
in view of \eqref{d7}
we can choose $\tilde{t}$ such that $ \| \tn{Q}(\tilde{t},\cdot) \|_{L^\infty} \le r_1/2 $
and apply Gronwall's lemma starting from the time~$\tilde{t}$.
Hence, in view of \eqref{i7}, we may assume that $F$,
in addition to (\ref{d3a}), satisfies
\bFormula{d9}
  \partial F (\tn{Q}) : \tn{Q} \geq 0.
\eF
Thus, revisiting (\ref{d7}), we may infer that
\bFormula{d8a}
  \| \tn{Q} (t, \cdot) \|_{L^\infty(\RR^3, \RR^{3 \times 3})} \to 0 \ \mbox{as}\ t \to \infty.
\eF


\subsection{$L^2-$decay of $\tn{Q}$}

Using an approximation by smooth functions, we can take
\[
  G(z) = \sqrt{z}
\]
in (\ref{d2}).
It is worth noting that the above function $G$ is not convex; nevertheless,
by a direct computation one can check that the sum
$2 G'(|\tn{Q}|^2) |\Grad \tn{Q} |^2 + G''(|\tn{Q} |^2) \left| \Grad |\tn{Q}|^2 \right|^2$
is nonnegative anyway.
Hence, by virtue of (\ref{d8}), (\ref{d9}), we can conclude that
\bFormula{d10}
  \| \tn{Q} (t, \cdot) \|_{L^1(\RR^3; \RR^{3 \times 3})} \leq c \ \mbox{for all}\ t > 0.
\eF
To be more precise, we have to notice that
\eqref{d8} has been justified so far only for $t$ greater than some
(sufficiently large) time $\tilde T$. Hence, relation \eqref{d10} should be
proved first on the time interval $[0,\tilde T]$ by integrating~\eqref{d2}
and using the Gronwall lemma (indeed, on $[0,\tilde T]$, the right hand side
of \eqref{d2} needs not be negative),
and subsequently extended for $t\ge \tilde T$ by means of~\eqref{d8}.
Finally, taking $G(z) = z$ in (\ref{d2}) we obtain
\bFormula{d11}
  \frac{{\rm d}}{{\rm d}t} \intR{ \frac{1}{2} |\tn{Q}|^2 }
   + \intR{ |\Grad \tn{Q} |^2 } \leq 0 \ \mbox{for all}\ t > 0 \ \mbox{large enough.}
\eF
Recalling (\ref{d8a}) and
using the standard interpolation inequalities, we get
\[
  \| \tn{Q} \|_{L^2(\RR^3; \RR^{3 \times 3})}
    \leq \| \tn{Q} \|_{L^1(\RR^3; \RR^{3 \times 3})}^{2/5} \| \tn{Q} \|_{L^6(\RR^3; \RR^{3 \times 3})}^{3/5}
    \leq c \| \Grad \tn{Q} \|^{3/5}_{L^2(\RR^3, \RR^{27})},
\]
whence (\ref{d11}) implies
\bFormula{d12}
  \| \tn{Q} (t, \cdot) \|_{L^2(\RR^3; \RR^{3\times 3})}^2
    \leq c (1 + t)^{-3/2} \ \mbox{for all} \ t \geq 0.
\eF

If $F$ satisfies the hypothesis (\ref{m3bis}) the $L^2-$decay rate is exponential, specifically
\bFormula{d12bis}
  \| \tn{Q} (t, \cdot) \|_{L^2(\RR^3; \RR^{3\times 3})}^2
    \leq c \exp(-d t) \ \mbox{for all} \ t \geq 0 \ \mbox{and some}\ d > 0.
\eF
Indeed, since $F$ is twice continuously differentiable, (\ref{m3bis}) implies strict positivity of the Hessian of $F$ at $\tn{Q} = 0$, in particular,
$F$ is strictly convex in a neighborhood of zero. Consequently,
\[
\partial F(\tn{Q}) : \tn{Q} \geq F(\tn{Q}) \geq \lambda |\tn{Q}|^2
\]
and (\ref{d12bis}) follows from (\ref{d2}) with $G(z) = z$.


\section{Decay for the Navier-Stokes system via the Fourier splitting method}
\label{v}

Using elementary inequalities, the (differential version of the)
energy inequality~(\ref{m1}) can be rewritten in the form
\bFormula{v1-0}
  \frac{{\rm d}}{{\rm d}t} \intR{ \left[ \frac{1}{2} |\vu|^2
      + \frac{1}{2} |\Grad \tn{Q} |^2 + F(\tn{Q}) \right] }
   + \intR{ \left[ |\Grad \vu |^2 + \frac12 | \Del \tn{Q} |^2
       - c \left| \mathcal{L}[\partial F(\tn{Q})] \right|^2 \right] }
   \le 0.
\eF
Moving the last integrand to the right hand side,
and noting that, thanks to (\ref{d3a}),
\bFormula{v1x}
  \intR{ \left| \mathcal{L}[\partial F(\tn{Q})] \right|^2 }
  \le c \intR { | \tn{Q} |^2 },
\eF
%
%
we then obtain
%
%
%
%
%
%
\bFormula{v1}
  \frac{{\rm d}}{{\rm d}t} \intR{ \left[ \frac{1}{2} |\vu|^2
   + \frac{1}{2} |\Grad \tn{Q} |^2 + F(\tn{Q}) \right] (t, \cdot) }
  + \intR{ \left[ |\Grad \vu |^2 + \frac12 | \Del \tn{Q} |^2  \right] } \leq c (1 + t)^{-3/2},
\eF
thanks also to~(\ref{d12}).

The extra term on the right-hand side of (\ref{v1}) is responsible for the loss
of the exponential decay rate for $\tn{Q}$ in the general case. Actually, if $F$ satisfies~\eqref{m3bis},
then the energy inequality reads
\bFormula{v1bis}
  \frac{{\rm d}}{{\rm d}t} \intR{ \left[ \frac{1}{2} |\vu|^2
   + \frac{1}{2} |\Grad \tn{Q} |^2 + F(\tn{Q}) \right] (t, \cdot) }
   + c \intR{ \left[ \frac{1}{2} |\Grad \vu|^2
   + \frac{1}{2} |\Grad \tn{Q} |^2 + F(\tn{Q}) \right] (t, \cdot) } \leq 0, \ c > 0.
\eF
Indeed we have
\[
 \int_{\RR^3} \left| \Delta \tn{Q} - \mathcal{L} \left[ \partial F(\tn{Q}) \right] \right|^2 \ \dx
 \geq c \int_{\RR^3} \left[ \frac{1}{2} | \Grad \tn{Q}|^2 + F(\tn{Q}) \right] \ \dx, \ c > 0,
\]
for any $\tn{Q}$ in an open neighborhood of zero. To see this, denote
\[
- \Delta \tn{Q} + \mathcal{L} \left[ \partial F (\tn{Q} ) \right] = \tn{G}
\]
and observe that
\[
\int_{\RR^3} \left[ \frac{1}{2} |\Grad \tn{Q} |^2 + F(\tn{Q}) \right] \ \dx \leq
c
\int_{\RR^3} \left[ |\Grad \tn{Q}|^2 + \partial F(\tn{Q}) : \tn{Q} \right] \ \dx
 = c \int_{\RR^3} \tn{G} : \tn{Q} \ \dx;
\]
where the term on the right-hand side may be ``absorbed'' by means of the Cauchy-Schwartz
inequality provided that $F$ satisfies (\ref{m3bis}).


\subsection{Fourier analysis for the Navier-Stokes system}

Let
\[
\widehat{v} (t, \xi) = \frac{1}{(2\pi)^{3/2}} \intR{ \exp(- {\rm i} \xi \cdot x ) v(t,x) }
\]
denote the Fourier transform of a function $v$ with respect to the
spatial variable $x$. Accordingly, the velocity field $\vu$, solving
(\ref{p7}), can be written as
\bFormula{v2}
\widehat{u}_i(t, \xi) = \exp \left( - |\xi|^2 t \right) \widehat{u}_{0,i}(\xi)
\eF
\[
+ \int_0^t \exp \left( - |\xi|^2 (t - s) \right)  \left[ \left( \delta_{i,j} - \frac{\xi_i \xi_j}{|\xi|^2} \right) \xi_k
 \left( - \widehat{\tn{Q} \Del \tn{Q}}  + \widehat{\Del \tn{Q} \tn{Q}}
  - \widehat{\Grad \tn{Q} \odot \Grad \tn{Q}}
  - \widehat{\vu \otimes \vu} \right)_{j,k} (s,\xi) \right] \ {\rm d}s.
\]
Now, we observe that
\[
  \| ( \Grad \tn{Q} \odot \Grad \tn{Q})(t, \cdot) \|_{L^1(\RR^3; \RR^3)}
   + \| (\vu \otimes \vu) (t, \cdot) \|_{L^1(\RR^3;\RR^3)} \leq c \mathcal{E}(t) \ \mbox{for all}\ t \geq 0,
\]
where
\[
\mathcal{E}(t) = \int_{\RR^3} \left( \frac{1}{2} |\vu|^2 + \frac{1}{2} | \Grad \tn{Q} |^2 + F(\tn{Q}) \right) \ \dx.
\]
Consequently,
\bFormula{v3}
  \left| \widehat{\Grad \tn{Q} \odot \Grad \tn{Q}} (t, \xi) \right|
    +  \left| \widehat{\vu \otimes \vu} (t, \xi) \right| \leq c \mathcal{E}(t) \ \mbox{for all} \ t, \xi.
\eF
Next, writing
\[
 (\tn{Q} \Del \tn{Q})_{ij}
  = \tn{Q}_{ik} ( \partial_l \partial_l \tn{Q}_{kj} )
  = \partial_l \left( \tn{Q}_{ik} \partial_l \tn{Q}_{kj} \right)
      - \partial_l \tn{Q}_{ik} \partial_l \tn{Q}_{kj}
\]
and, analogously,
\[
 (\Del \tn{Q} \tn{Q})_{ij}
  = (\partial_l \partial_l \tn{Q}_{ik}) \tn{Q}_{kj}
  = \partial_l \left( \partial_l \tn{Q}_{ik} \tn{Q}_{kj} \right)
      - \partial_l \tn{Q}_{ik} \partial_l \tn{Q}_{kj},
\]
we may therefore infer that
\bFormula{v4}
  \left|-\widehat{ \tn{Q} \Del \tn{Q}} (t, \xi) + \widehat{ \Del \tn{Q} \tn{Q} } (t, \xi) \right|
  = \left| \widehat{ \Div ( \Grad \tn{Q} \tn{Q} ) } - \widehat{ \Div ( \tn{Q} \Grad \tn{Q} ) } \right|
\eF
\[
  \leq c(1 + |\xi| ) \left( \mathcal{E}(t) + \int_{\RR^3} |\tn{Q} |^2(t, \cdot) \ \dx \right)
  \leq  c(1 + |\xi| ) \left( \mathcal{E}(t) + (1 + t)^{-3/2} \right) \
  \mbox{for all}\ t, \xi.
\]
Thus, combining (\ref{v3}), (\ref{v4}) with (\ref{v2}) we conclude
\bFormula{v5}
|\widehat{u}_i|(t, \xi) \leq \exp \left( - |\xi|^2 t \right) |\widehat{u}_{0,i}|(\xi)
\eF
\[
 \mbox{}
  + c \int_0^t \exp \left( - |\xi|^2 (t - s) \right) |\xi| ( 1 + |\xi| )\left( \mathcal{E}(s) + (1 + s)^{-3/2} \right) {\rm d}s,
   \ i=1,2,3.
\]
Here again, we remark that (\ref{v5}) does not contain the extra term $(1 + s)^{-3/2}$ if $F$ satisfies (\ref{m3bis}).


\subsection{First decay estimate}

Having collected all the necessary ingredients, we are ready to finish the proof of
Theorem~\ref{Tm1}. We focus on the case of a general nonlinearity $F$ and then
shortly comment on how to modify the arguments when $F$ satisfies (\ref{m3bis}).
To begin, note that
\bFormula{kk}
\intR{ \left[ \frac12 | \Grad \tn{Q} |^2 + F(\tn{Q}) \right] }
  \le \intR { \frac12 | \Del \tn{Q} |^2 + c_1 \left[ F(\tn{Q}) + \frac{1}{2} | \tn{Q} |^2 \right] }
\eF
\[
  \le \intR { \left[ \frac12 | \Del \tn{Q} |^2 + c_2 | \tn{Q} |^2 \right] }
  \le \intR { \frac12 | \Del \tn{Q} |^2 } + c_3 (1+t)^{-3/2}.
\]
Adding \eqref{kk} to \eqref{v1} and applying Plancherel's Theorem,
we obtain
\bFormula{EI}
\begin{split}
\frac{{\rm d}}{{\rm d}t} \intR{ \left[ \frac{1}{2} |\vu|^2
    + \frac{1}{2} | \Grad {\tn{Q}} |^2 + F(\tn{Q})  \right] (t, \cdot) }
  + \int_{\mathbb R^3} \Big| |\xi| \widehat{ \vu } \Big|^2 \ {\rm d}\xi \\
  \mbox{} +  \intR{ \left[\frac12 | \Grad \tn{Q}  |^2  + F(\tn{Q})  \right] } \leq c (1 + t)^{-3/2}.
\end{split}
\eF
Next, we have
\bFormula{v7}
\int_{\RR^3} \Big| |\xi| \widehat{\vu} \Big|^2 \ {\rm d}\xi
= \int_{|\xi| < R(t)} \Big| |\xi| \widehat{\vu} \Big|^2 \ {\rm d}\xi + \int_{|\xi| \geq R(t)} \Big| |\xi| \widehat{\vu} \Big|^2 \ {\rm d}\xi
\geq R^2 (t) \int_{|\xi| \geq R(t)} | \widehat{\vu} |^2 \ {\rm d}\xi.
\eF
Replacing \eqref{v7} into \eqref{EI},  we infer
\bFormula{v7b}
 \frac{{\rm d}}{{\rm d}t} \mathcal{E}(t)
   + R^2(t) \mathcal{E}(t)
  \leq R^2(t) \int_{|\xi|<R(t)} \big| \widehat{ \vu } \big|^2 \ {\rm d}\xi
   + c (1 + t)^{-3/2} \ \mbox{for any}\ 0 \leq R(t) \leq 1.
\eF
In order to evaluate the integral on the right hand side, we notice that,
in agreement with (\ref{v5}),
\bFormula{v8}
 \int_{|\xi| < R(t)} |\widehat{\vu}|^2 \ {\rm d}\xi
\eF

\[
 \leq c \| \vu_0 \|^2_{L^1(\RR^3; \RR^3)} \int_{|\xi| < R(t)} \exp \left( - 2 |\xi|^2 t \right) \ {\rm d}\xi
\]
\[
 \mbox{} + c \int_{|\xi| < R(t)} \left( \int_0^t \exp \left( - |\xi|^2 (t - s) \right)
       |\xi| ( 1 + |\xi| ) \left( \mathcal{E}(s) + (1 + s)^{-3/2} \right)   {\rm d}s \right)^2 \ {\rm d}\xi.
\]
Now, let us evaluate the first integral on the right hand side: passing to polar
coordinates and then substituting $r:=\rho^3(t+1)^{3/2}$ we get
\bFormula{v8b}
 \int_{|\xi| < R(t)} \exp \left( - 2 |\xi|^2 t \right) \ {\rm d}\xi
  \le c \int_0^{R(t)} \exp \left( - 2 \rho^2 t \right) \rho^2 \ {\rm d}\rho
  \le c \int_0^{R(t)} \exp \left( - 2 \rho^2 ( t + 1 ) \right) \rho^2 \ {\rm d}\rho
\eF
\[
  = c (t+1)^{-3/2} \int_0^{R^3(t)(t+1)^{3/2}} e^{-2 r^{2/3} } \ {\rm d}r
  \le c (t+1)^{-3/2} \int_0^{+\infty} e^{-2 r^{2/3} } \ {\rm d}r
  \le c (t+1)^{-3/2},
\]
where we also assumed (and used) the fact that $R(t)$
will be chosen to be smaller than $1$.

Collecting (\ref{v7b} - \ref{v8b}), we then conclude that
\bFormula{vvv}
  \frac{{\rm d}}{{\rm d}t} \mathcal{E}(t)
   + R^2(t) \mathcal{E}(t)
\eF
\[
 \leq c \left[  (t+1)^{-3/2}
   + R^2(t) \int_{|\xi| < R(t)} \left( \int_0^t \exp \left( - |\xi|^2 (t - s) \right)
       |\xi| \left( \mathcal{E}(s) + (1 + s)^{-3/2} \right)   {\rm d}s \right)^2 \ {\rm d}\xi  \right]
\]
\[
= c \left[  (t+1)^{-3/2} + R^2(t) \int_0^{R(t)} \left( \int_0^t \exp \left( - r^2 (t - s) \right)
       r^2 \left( \mathcal{E}(s) + (1 + s)^{-3/2} \right)   {\rm d}s \right)^2 \ {\rm d}r  \right]
\]
for any  $0 \leq R(t) \leq 1$.


\subsection{A bootstrap argument}

The inequality (\ref{vvv}) is a starting point of a bootstrap procedure to deduce
the desired decay estimate (\ref{m3}). We start with
an auxiliary assertion.
\begin{Lemma}\label{l:var}
 Let $\gamma \in (0,1)$, $\mu >0$ and $\gamma <\mu$. If
 \bFormula{diff}
 \frac{{\rm d}}{{\rm d}t} \mathcal{E}(t) +  (1 + t)^{-\gamma} \mathcal{E}(t) \leq c (1 + t)^{-\mu},
 \eF
 then
 \bFormula{decay}
   \mathcal{E} (t) \le c(\gamma,\E(0)) (1 + t)^{-\mu+\gamma}.
 \eF
\end{Lemma}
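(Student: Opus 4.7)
\medskip

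\noindent\textbf{Proof plan for Lemma \ref{l:var}.}

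The plan is to treat \eqref{diff} as a scalar linear ODE inequality and apply the integrating factor method, followed by a splitting of the resulting time-convolution integral. Define the primitive
\[
\phi(t) := \int_0^t (1+s)^{-\gamma}\,{\rm d}s = \frac{(1+t)^{1-\gamma}-1}{1-\gamma},
\]
which is meaningful and unbounded as $t\to\infty$ thanks to $\gamma\in(0,1)$. Multiplying \eqref{diff} by $e^{\phi(t)}$ and integrating from $0$ to $t$ gives
\[
\E(t) \leq e^{-\phi(t)}\E(0) + c\int_0^t e^{-(\phi(t)-\phi(s))}(1+s)^{-\mu}\,{\rm d}s.
\]
Since $\phi(t)\sim t^{1-\gamma}/(1-\gamma)$, the first term decays faster than any polynomial, so everything reduces to a sharp estimate of the convolution integral on the right.

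For the convolution integral I would split the interval at $t/2$. On $[0,t/2]$ the exponential factor is bounded by $\exp(-(\phi(t)-\phi(t/2)))$, and $\phi(t)-\phi(t/2)\geq c\,t^{1-\gamma}$, so this contribution is $O(e^{-c\,t^{1-\gamma}})$ up to a polynomially growing prefactor, hence negligible relative to the target rate. On $[t/2,t]$ the polynomial factor satisfies $(1+s)^{-\mu}\leq c(1+t)^{-\mu}$, so it suffices to show
\[
\int_{t/2}^t e^{-(\phi(t)-\phi(s))}\,{\rm d}s \leq c\,(1+t)^{\gamma}.
\]
This I would obtain via the change of variables $u=\phi(t)-\phi(s)$, which gives ${\rm d}s = (1+s)^{\gamma}\,{\rm d}u \leq (1+t)^{\gamma}\,{\rm d}u$ for $s\in[t/2,t]$, reducing the integral to $(1+t)^{\gamma}\int_0^{\phi(t)-\phi(t/2)}e^{-u}\,{\rm d}u\leq (1+t)^{\gamma}$. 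Combining the two pieces yields
\[
\int_0^t e^{-(\phi(t)-\phi(s))}(1+s)^{-\mu}\,{\rm d}s \leq c\,(1+t)^{\gamma-\mu},
\]
which is exactly the claimed rate \eqref{decay}.

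The only point that needs a bit of care is the small-time range: the estimate above is asymptotic, so for $t$ in a bounded initial interval one has to absorb $\E(0)$ and the uniform local bound on $\E(t)$ (immediate from \eqref{diff}) into the constant, which is why the constant in \eqref{decay} is allowed to depend on $\E(0)$ and $\gamma$. I do not foresee any real obstacle here; the main subtlety is merely bookkeeping to make sure that $\gamma<1$ (to have $\phi(t)\to\infty$ with superpolynomial decay of $e^{-\phi(t)}$) and $\gamma<\mu$ (so that $(1+t)^{\gamma-\mu}$ is actually a decay rate and no logarithmic correction appears) are both used in exactly the right places.
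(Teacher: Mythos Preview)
Your proposal is correct and follows the same integrating-factor strategy as the paper: both reduce to estimating the convolution
\[
g(t)^{-1}\int_0^t g(s)(1+s)^{-\mu}\,{\rm d}s,\qquad g(t)=\exp\!\left(\tfrac{(1+t)^{1-\gamma}}{1-\gamma}\right),
\]
and both ultimately split at $t/2$. The difference is in how the convolution is handled. The paper first exploits the identity $g(s)=(1+s)^\gamma g'(s)$ to integrate by parts, producing the leading term $(1+t)^{\gamma-\mu}$ directly and leaving a remainder $\int_0^t g(s)(1+s)^{\gamma-\mu-1}\,{\rm d}s$ which is then split. You instead split immediately and, on $[t/2,t]$, use the change of variables $u=\phi(t)-\phi(s)$ to get $\int_{t/2}^t e^{-(\phi(t)-\phi(s))}\,{\rm d}s\le (1+t)^\gamma$. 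Your route is slightly more direct (no integration by parts, no remainder integral to re-estimate); the paper's route makes the leading term appear explicitly from the boundary contribution. Both are standard and equally rigorous, and your bookkeeping remarks about the roles of $\gamma<1$ and $\gamma<\mu$ are accurate.
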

\begin{proof}
Let $g(t) = \exp\left[ (1-\gamma)^{-1}(1+t)^{1-\gamma}\right]$. Then (\ref{diff}) yields
\[
  \frac{{\rm d}}{{\rm d}t} (g(t) \mathcal{E}(t) ) \leq  c g(t) (1 + t)^{-\mu}.
\]
Hence,
\begin{equation} \label{1}
  \mathcal{E}(t)  \leq g(t)^{-1} \mathcal{E}(0)
   + c g(t)^{-1} \int_0^t (1+s)^{-\mu} g(s) \ \ds =: I_1(t) + I_{2}(t).
\end{equation}
Clearly it is sufficient to handle $I_{2}$. Noting that
$g(t)=(1+t)^\gamma g'(t)$ and then integrating by
parts, we obtain
\[
  I_2(t)  = c g(t)^{-1} \int_0^t (1+s)^{ \gamma - \mu} g'(s) \ \ds
   \le c (1+t)^{\gamma-\mu}
    + c (\mu-\gamma) g(t)^{-1} \int_0^t g(s) (1+s)^{ \gamma - \mu -1 }  \ \ds.
\]
To control the last term we simply split it as
\[
  c (\mu-\gamma) g(t)^{-1} \int_0^t g(s) (1+s)^{ \gamma - \mu -1 }  \ \ds
  = c (\mu-\gamma) g(t)^{-1} \int_0^{t/2} g(s) (1+s)^{ \gamma - \mu -1 }  \ \ds
\]
\[
  \mbox{}
   + c (\mu-\gamma) g(t)^{-1} \int_{t/2}^t g(s) (1+s)^{ \gamma - \mu -1 }  \ \ds
  =: I_3 + I_4
\]
where
\[
I_3\leq c (\mu-\gamma)  \frac{g(t/2)}{g(t)} \int_0^{t/2} (1+s)^{ \gamma - \mu -1 } \ \ds
\leq c \frac{g(t/2)}{g(t)}
\]
which decays exponentially fast, and
\[
I_4\leq c (\mu-\gamma) (1+\frac t2)^{ \gamma - \mu -1 } \int_{t/2}^t \frac{g(s)}{g(t)}   \ \ds
\leq c (\mu-\gamma) (1+t)^{ \gamma - \mu}.
\]
The lemma is proved.
\end{proof}
\noindent
Now, we are ready to start bootstraping (\ref{vvv}). Suppose we have already shown
\bFormula{boot}
  \mathcal{E}(t) \leq c {(1 + t)^{-\alpha}}, \ 0 \leq \alpha.
\eF
Accordingly, the inequality (\ref{vvv}) gives rise to
\bFormula{vvv1}
\frac{{\rm d}}{{\rm d}t} \mathcal{E}(t)
   + R^2(t) \mathcal{E}(t)
\eF
\[
  \leq c \left[  (t+1)^{-3/2} + R^2(t) \int_0^{R(t)} \left( \int_0^t \exp \left( - r^2 (t - s) \right)
       r^2 \frac{1}{(1 + s)^\alpha} \,  {\rm d}s \right)^2 \ {\rm d}r  \right],
\]
where
\[
\int_0^t \exp \left( - r^2 (t - s) \right)
       r^2 \frac{1}{(1 + s)^\alpha} \, {\rm d}s = \exp \left( - r^2 t \right) r^{2 \alpha} \int_0^t \exp \left(  r^2 s \right)
       r^2 \frac{1}{(r^2 + r^2 s)^\alpha} \,  {\rm d}s
\]
\[
= r^{2 \alpha} \exp \left( - r^2 t \right)  \int_0^{r^2 t} \frac{ \exp z }
       {(r^2 + z)^\alpha} \,  {\rm d}z.
\]
Let us now observe that
\begin{equation}\label{vvv2}
  \displaystyle
  \exp \left( - r^2 t \right)  \int_0^{r^2 t} \frac{ \exp z }
       {(r^2 + z)^\alpha}   {\rm d}z \leq
       \begin{cases}
         1, & \;\mbox{if}\; \alpha =0,\\
        \frac{1}{1-\alpha} [( r^2(1+t) )^{1-\alpha}-r^{2(1-\alpha)}] \leq  \frac{ r^{2(1-\alpha)} }{1-\alpha}, &\;\mbox{if}\;  0<\alpha<1,\\
       \frac{1}{1-\alpha} [( r^2(1+t) )^{1-\alpha}-r^{2(1-\alpha)})]\leq  \frac{ r^{2(1-\alpha)} }{\alpha-1},   &\;\mbox{if}\;  1<\alpha.
  \end{cases}
 \end{equation}
Hence
\begin{equation}
r^{2 \alpha} \exp \left( - r^2 t \right)  \int_0^{r^2 t} \frac{ \exp z }
       {(r^2 + z)^\alpha} \,  {\rm d}z \leq
       \begin{cases}
   1, & \;\mbox{if}\; \alpha =0,\\
c(\alpha) r^2, &  \;\mbox{if} \;\alpha>0,\, \mbox{and}\;\alpha \neq 1.
\end{cases}
\end{equation}
Thus, \eqref{vvv1} gives
\bFormula{vvv3b}
  \frac{{\rm d}}{{\rm d}t} \mathcal{E}(t)
   + R^2(t) \mathcal{E}(t) \leq
   \begin{cases} c(\alpha) \left[  (t+1)^{-3/2} + R^3 (t) \right], & \;\mbox{if}\; \alpha =0,\\
     c(\alpha) \left[  (t+1)^{-3/2} + R^{7} (t) \right], &  \;\mbox{if} \;\alpha>0,\, \mbox{and}\;\alpha \neq 1.\end{cases}
\eF
Now, from the uniform boundedness of the energy, we know that
\eqref{boot} holds for $\alpha =0$.
Hence, taking $R(t) =(1+t)^{-\beta}$ with
\[
  \beta = \frac{1}{2} -\frac{\epsilon}{3},
\]
where $\epsilon>0$ is a small number, we obtain
$2\beta= 1- \frac{2\epsilon}{3}<1$ and $R^3 (t) = (1+t)^{-\frac{3}{2}+\epsilon}$
yielding
\[
\frac{{\rm d}}{{\rm d}t} \mathcal{E}(t)
   + (1 + t)^{-2 \beta} \mathcal{E}(t) \leq c(\alpha) (t+1)^{-3/2+\epsilon }.
\]
Since $2\beta <1$ (this is  why we subtracted  $\epsilon > 0$),
Lemma~\ref{l:var} can be applied with $\gamma =2\beta$ and  $ \mu =3/2-\epsilon$,
whence
\[
  \E(t) \leq c(t+1)^{- \mu + 2\beta} = c(t+1)^{ -\frac{1}{2} +\frac{\epsilon}{3}}.
\]
Now we can take $2\beta = \frac{3}{7}$. Repeating the above argument
with $\alpha = \frac{1}{2} - \frac{\epsilon}{3}$,
and referring to the second row of formula (\ref{vvv3b}) yields
\bFormula{vvv3d}
  \E(t) \leq c(t+1)^{- \frac{3}{2} + \frac{3}{7} }= c(t+1)^{ -\frac{15}{14}}.
\eF
We are now ready to provide a more refined estimate of the quantity on the
right hand side of \eqref{vvv}. Using \eqref{vvv3d},
and noting that $\frac{3}{2} > \frac{15}{14}$, we only need to handle the term
depending on $\mathcal{E}$ in \eqref{vvv}. Actually,
by \eqref{vvv3d} we have
\[
   \int_0^t \exp \left( - r^2 (t - s) \right)
       r^2 \mathcal{E}(s) \, {\rm d}s  \leq    \int_0^t \exp \left( - r^2 (t - s) \right)
       r^2 (s+1)^{-\frac{15}{14}} \,{\rm d}s.
\]
Squaring and splitting the integral we obtain
\[
  \left(\int_0^t \exp \left( - r^2 (t - s)\right )
       r^2 (s+1)^{-\frac{15}{14}} \,{\rm d}s \right)^2
     \le  2\left( \int_0^{\frac{t}{2}} \exp \left( - r^2 (t - s)\right )
       r^2 (s+1)^{-\frac{15}{14}} \,{\rm d}s\right)^2 +
\]
\[
   2 \left( \int_{\frac{t}{2}}^t \exp \left( - r^2 (t - s)\right )
       r^2 (s+1)^{-\frac{15}{14}} \, {\rm d}s \right)^2 =: 2J_1 + 2J_2.
\]
Then, Jensen's inequality  yields
\[
   J_1 \leq \frac{t}2 \left(\int_0^{t/2}\exp \left( -2 r^2 (t - s) \right)
       r^4 (s+1)^{-\frac{30}{14}}    \,{\rm d}s \right) =: \frac{t}2 J_3.
\]
Thus $\Aa := R^2(t) \int_0^{R(t)} \left( \int_0^t \exp \left( - r^2 (t - s) \right) r^2
\left( \mathcal{E}(s) + (1 + s)^{-3/2} \right)   \, {\rm d}s \right)^2  \, {\rm d}r $
can be bounded as follows:
\bg \label{a}
  \Aa\leq c R^2(t) \left[t\int_0^{R(t)}  J_3  \,{\rm d}r
    + \int_0^{R(t)} J_2 \, {\rm d}r\right]
     =: c R^2(t) [\Aa_1 +\Aa_2 ].
\ed

\smallskip

To control $\Aa_1$,  change the order of integration:
\[
  \Aa_1 = t \int_0^{\frac{t}{2}} \int_0^{R(t)} \exp \left( - 2 r^2 (t - s)\right )
       r^4 (s+1)^{-\frac{30}{14}}\,{\rm d}r\, {\rm d}s,
\]
and  make the change of variables $\sigma = \sqrt 2r\sqrt{t-s}$. Since  for $s\in (0,t/2)$
we have $(t-s)^{-5/2} \leq c t^{-5/2}$, we get
\bg \label{a1}
  \Aa_1 =  \frac{t}{2^{5/2}}
  \int_0^{\frac{t}{2}}\frac{1}{(s+1)^{30/14} } \frac{1}{(t-s)^{5/2}}\int_0^{\sqrt{2} R(t) \sqrt{t-s}}\sigma^4 \exp(-\sigma^2)  \,
   {\rm d}\sigma \, {\rm d}s
\ed
\[
  \leq   Ct^{-3/2}
  \int_0^{\infty}\sigma^4\exp(-\sigma^2) \, {\rm d}\sigma
 \leq Ct^{-3/2}.
\]

\smallskip

Next, let us estimate $\Aa_2$. To this aim, note that for $s \in (t/2,t) $ we have \\ $ (s+1)^{-15/14} \leq c (t+1)^{-15/14}$,
and   since
\[ \int_{t/2}^t \exp \left( - r^2( t-s) \right) r^2 \,{\rm d}s
 = 1- \exp ( - r^2( t/2) )\leq 1,
\]
the term $\Aa_2$ can be bounded by
\bg \label{a2}
\begin{split}
 \Aa_2 &=  \int_0^{R(t)}\left(\int_{t/2}^t \exp \left( - r^2( t-s)\right ) r^2  {(s+1)^{-15/14} } \, {\rm d}s\right)^2 \, {\rm d}r \\
 &\leq C(t+1)^{-30/14}  \int_0^{R(t)} \left(\int_{t/2}^t\exp \left( - r^2( t-s)\right ) r^2\, {\rm d}s \right)^2  \,{\rm d}r.
\end{split}
\ed
Thus
\bg \label{a3}  \Aa_2  \leq CR(t)\, (t+1)^{-30/14}. \ed
Combining inequalities \eqref{vvv}, \eqref{a}, \eqref{a1}, \eqref{a2} and \eqref{a3},
since $\frac{30}{14} >\frac{3}{2}$, yields
\bg \label{final}
\begin{split}
& \frac{{\rm d}}{{\rm d}t} \mathcal{E}(t)
   + R^2(t) \mathcal{E}(t) \\
   & \leq C\left( R^2(t) t^{-3/2} +(t+1)^{-3/2}  + R^3(t) (t+1)^{-30/14}\right)\\
   & \leq C\left( R^2(t) t^{-3/2} +(t+1)^{-3/2}  + R^3(t) (t+1)^{-3/2}\right).
\end{split}
\ed
Choose $R(t)=1$  and since $ (t+1)^{-3/2}\leq t^{-3/2}  $
\bg \label{final1}
 \frac{{\rm d}}{{\rm d}t} \mathcal{E}(t)
   +  \mathcal{E}(t)
   \leq Ct^{-3/2}.
\ed
Variation of parameters,  with the multiplier $e^{-t}$,  yields integrating  over $ [1,t]$
\bg \label{f}
  \mathcal{E}(t) \leq \mathcal{E}(1)e^{-(t-1)}  + C \int_1^t e^{-(t-s)} s^{-3/2} \, {\rm d}s.
\ed
Splitting the integral on the right hand side gives
\bg \label{help}\begin{split}
    \int_1^t e^{-(t-s)} s^{-3/2} \, {\rm d}s
     =\int_1^{t/2}e^{-(t-s)} s^{-3/2} \, {\rm d}s +\int_{t/2}^t e^{-(t-s)} s^{-3/2} \,{\rm d}s \leq\\
   2e^{-t/2} ( 1 - t^{-1/2} ) + (t/2)^{-3/2} (1-e^{-t/2} )\leq C (t/2)^{-3/2}.
  \end{split}
\ed
Combining \eqref{f} and \eqref{help} yields the desired decay rate, which is optimal since it coincides
with the underlying linear part:
\[ \mathcal{E}(t) \leq C (t/2)^{-3/2}\]
This concludes the proof of Theorem~\ref{Tm1}.


\section{Examples}
\label{D}

We consider the class of polynomial potentials investigated by Paicu and Zarnescu \cite{PaiZar2}, specifically,
\bFormula{D1}
F(\tn{Q}) = \frac{a}{2} |\tn{Q}|^2 + \frac{b}{3} \trace[\tn{Q}^3 ] + \frac{c}{4} |\tn{Q}|^4,
\eF
observing that
\[
\mathcal{L}[\partial F (\tn{Q})] = a \tn{Q}
+ b \left( \tn{Q}^2 - \frac{1}{3} \trace[ \tn{Q}^2 ] \tn{I} \right) + c |\tn{Q}|^2 \tn{Q}.
\]
Here and hereafter, $a,b,c$ are real parameters.


\subsection{The case $a > 0$}

If $a > 0$, the isotropic state is at least locally stable. It is easy to check that
there exists an open neighborhood $\mathcal{O}$ of $0$ such that
$F$ satisfies the hypotheses of Theorem \ref{Tm1} including (\ref{m3bis}). Accordingly, we get
\bFormula{D2}
   \| \vu(t, \cdot) \|_{L^2(\RR^3;\RR^3)}
     + \| \tn{Q}(t, \cdot) \|_{W^{1,2}(\RR^3; \RR^{3 \times 3})}  \leq c (1 + t)^{- 3/4 }
\eF
provided that there exists $t_0$ such that $\tn{Q}(t_0, \cdot) \in \mathcal{O}$
a.e.~in~$\RR^3$. The decay rate is global (unconditional) if $c > 0$
and $|b| \leq \overline{b}(a,c)$.


\subsection{The case $a \leq 0$, $c > 0$}

We consider solutions belonging to the class
\bFormula{D3}
 \tn{Q}(t, \cdot) \in D^{1,2}(\RR^3; \RR^{3 \times 3}), \ F(\tn{Q}) \in L^1(\RR^3),
\eF
where $D^{1,2}$ denotes the completion of $\DC$ under the $\| \Grad \tn{Q} \|_{L^2}$ norm.

To begin, we observe that $\tilde{\tn{Q}} \equiv 0$ is the only stationary
solution belonging to the class (\ref{D3}). Indeed Pocho\v zaev's identity
(\ref{pocho}) reads
\[
  \int_{\RR^3} \left( \frac{1}{2} |\Grad \tilde{\tn{Q}}|^2 + \frac{3a}{2} |\tilde{\tn{Q}}|^2
   + {b} \trace[\tilde{\tn{Q}}^3 ] + \frac{3c}{4} |\tilde{\tn{Q}}|^4\right) \dx = 0,
\]
while, as $\tilde{\tn{Q}}$ solves the stationary problem,
\[
  \int_{\RR^3} \left(  |\Grad \tilde{\tn{Q}}|^2 + a |\tilde{\tn{Q}}|^2
   + b \trace[ \tilde{\tn{Q}}^3]  + c |\tilde {\tn{Q}}|^4 \right) \dx = 0.
\]
Consequently,
\[
  \int_{\RR^3} \left( -\frac{1}{2} |\Grad \tilde{\tn{Q}}|^2 + \frac{a}{2} |\tilde{\tn{Q}}|^2
    - \frac{c}{4} |\tilde {\tn{Q}}|^4 \right) \dx = 0,
\]
yielding the desired conclusion $\tilde{\tn{Q}} = 0$.

We claim the following:


{\it Suppose that
\bFormula{D4}
 \| \tn{Q}(t, \cdot) \|_{L^2(\RR^3; \RR^{3 \times 3})} \leq M \ \mbox{for a.a.}\ t > 0.
\eF
Then
\bFormula{D5}
 \tn{Q}(t, \cdot) \to 0 \ \mbox{in}\ L^\infty(\RR^3; \RR^{3 \times 3}) \ \mbox{as}\ t \to \infty.
\eF
}

In order to see (\ref{D5}) we first observe that (\ref{D4}) implies that the energy
\[
  \mathcal{E}(t) = \intR{ \left[ \frac{1}{2} |\vu|^2 + \frac{1}{2} |\Grad \tn{Q} |^2 + F(\tn{Q}) \right] (t, \cdot) }
\]
remains bounded, more specifically, $\mathcal{E}$ tends to a finite limit as $t \to \infty$.

This in turn implies the existence of a sequence $t_n \in [n, n+ 1]$ such that
\[
  - \Delta \tn{Q}(t_n, \cdot) + \mathcal{L} [ \partial F(\tn{Q}(t_n , \cdot) ] \to 0
   \ \mbox{in}\ L^2(\RR^3; \RR^{3 \times 3}) \ \mbox{as}\ t_n \to \infty.
\]
However, (\ref{D4}), together with the standard elliptic regularity
estimates, implies that $\tn{Q}(t_n, \cdot)$ converges uniformly to
a stationary solution, meaning to zero. We may therefore infer that
\[
\|\tn{Q}(t_n, \cdot)\|_{L^\infty(\RR^3; \RR^{3 \times 3})}=q_n\to 0 \quad n\to \infty.
\]
Going back to (\ref{d2}) and taking $G(z)=z^{p/2}$ we deduce
\[
  \| \tn{Q}(t, \cdot) \|_{L^p(\RR^3; \RR^{3 \times 3})}
   \leq \exp{\left(K (t-s)\right)} \| \tn{Q}(s, \cdot) \|_{L^p(\RR^3; \RR^{3 \times 3})},\ p> 2,  \ t \geq s,
\]
where $K$ does not depend on $p$.
In particular, by interpolation, we get
\begin{align}\nonumber
\|\tn{Q}(t, \cdot)\|_{L^\infty(\RR^3; \RR^{3 \times 3})}&
%
=\sup_{x\in\RR^3}\|\tn{Q}(t, \cdot)\|_{L^\infty(B(x,1);\RR^{3\times 3})}
= \sup_{x\in\RR^3} \limsup_{p\to\infty}\|\tn{Q}(t,\cdot)\|_{L^p(B(x,1); \RR^{3 \times 3})}\\
\nonumber
&\leq \limsup_{p\to\infty}\|\tn{Q}(t,\cdot)\|_{L^p(\RR^3; \RR^{3 \times 3})}\\
\nonumber
&\leq  \exp{\left(K (t-t_n)\right)} \limsup_{p\to\infty}\| \tn{Q}(t_n, \cdot) \|_{L^p(\RR^3; \RR^{3 \times 3})}\\
\nonumber
&\leq \exp{\left(K (t-t_n)\right)} \limsup_{p\to\infty}\| \tn{Q}(t_n, \cdot) \|_{L^2(\RR^3; \RR^{3 \times 3})}^{2/p}\| \tn{Q}(t_n, \cdot) \|_{L^\infty(\RR^3; \RR^{3 \times 3})}^{1-2/p}\\
\nonumber
&\leq \exp{\left(K (t-t_n)\right)} \limsup_{p\to\infty}M^{2/p}q_n^{1-2/p},
\end{align}
which yields the claim.

\centerline{\bf Acknowledgement}

The authors would like to thank the anonymous referee for many valuable suggestions and comments that helped to improve considerably the final version.




\def\cprime{$'$} \def\ocirc#1{\ifmmode\setbox0=\hbox{$#1$}\dimen0=\ht0
  \advance\dimen0 by1pt\rlap{\hbox to\wd0{\hss\raise\dimen0
  \hbox{\hskip.2em$\scriptscriptstyle\circ$}\hss}}#1\else {\accent"17 #1}\fi}



\end{document}